\NeedsTeXFormat{LaTeX2e}

\documentclass{amsart}
\usepackage{amssymb}
\usepackage{amsmath}
\usepackage{verbatim}
\usepackage{hyperref}
\usepackage{amsrefs}

\usepackage{mathrsfs}

\def\Xint#1{\mathchoice
{\XXint\displaystyle\textstyle{#1}}%
{\XXint\textstyle\scriptstyle{#1}}%
{\XXint\scriptstyle\scriptscriptstyle{#1}}%
{\XXint\scriptscriptstyle\scriptscriptstyle{#1}}%
\!\int}
\def\XXint#1#2#3{{\setbox0=\hbox{$#1{#2#3}{\int}$}
\vcenter{\hbox{$#2#3$}}\kern-.5\wd0}}

\def\dashint{\Xint-}

\bibliographystyle{plain}




\newcommand{\norm}[1]{\ensuremath{\left\|#1\right\|}}
\newcommand{\innp}[1]{\ensuremath{\left\langle#1 \right\rangle}}
\newcommand{\abs}[1]{\ensuremath{\left\vert#1\right\vert}}

\newcommand{\MC}[1]{\ensuremath{\mathcal{#1}}}

\newcommand{\D}{\mathscr D}
\newcommand{\pr}[1]{\ensuremath{{\left(#1\right)}}}

\newcommand{\R}{\mathbb{R}}

\newcommand{\F}{\ensuremath{\mathscr{F}}}
\renewcommand{\L}{\ensuremath{\mathscr{L}}}

\newcommand{\Cn}{\ensuremath{\mathbb{C}^n}}
\newcommand{\J}{\ensuremath{\mathscr{J}}}

\newcommand{\Rd}{\mathbb{R}^d}

\newcommand{\V}[1]{\ensuremath{\vec{#1}}}
\newcommand{\inrd}{\int_{\R^d}}

\newcommand{\Ap}{\ensuremath{\text{A}_p}}
\newcommand{\Apwk}{\ensuremath{\text{A}_p ^{\text{wk}}}}
\newcommand{\Apprimewk}{\ensuremath{\text{A}_{p'} ^{\text{wk}}}}

\newcommand{\br}[1]{\ensuremath{[#1]}}

\newcommand{\vertiii}[1]{{\left\vert\kern-0.25ex\left\vert\kern-0.25ex\left\vert #1
    \right\vert\kern-0.25ex\right\vert\kern-0.25ex\right\vert}}



\newcounter{vremennyj}


\numberwithin{equation}{section}

\newtheorem{thm}{Theorem}[section]

\title[Sharp matrix weighted estimates]{Sharp matrix weighted strong type inequalities for the dyadic square function}

\author{Joshua Isralowitz}
\begin{document}

\author{Joshua Isralowitz}
\address{Department of Mathematics, University at Albany, SUNY,
Albany, NY, 12159               }
\email{jisralowitz@albany.edu}

\maketitle

\begin{abstract}

In this paper we refine the recent sparse domination of the integrated $p = 2$ matrix weighted dyadic square function by T. Hytonen, S. Petermichl, and A. Volberg to prove a pointwise sparse domination of general matrix weighted dyadic square functions. We then use this to prove sharp two matrix weighted strong type inequalities for matrix weighted dyadic square functions when $1 < p \leq 2$. \end{abstract}

\section{Introduction} Let be $U$ an a.e. positive definite $n \times n$ matrix valued function on $\Rd$ (that is, a matrix weight), and for a measurable $\Cn$ valued function $\V{f}$ on $\Rd$ define \begin{equation*} \|\V{f}\|_{L^p(U)} := \left(\inrd \abs{U^\frac{1}{p} (x) \V{f}(x)}^p \, dx \right)^\frac{1}{p} \end{equation*} where $\abs{\V{e}}$ is the standard Euclidean norm on $\Cn$.  We will say that a pair of matrix weights $U, V$ is matrix A${}_p$  if \begin{equation*} \br{U,V}_{\Ap} :=  \sup_{\substack{I \subseteq \Rd \\ I \text{ is a cube }}} \dashint_I \left( \dashint_I \| V^{-\frac{1}{p}}  (y) U ^\frac{1}{p} (x)\| ^{p'} \, dy \right)^\frac{p}{p'} \, dx < \infty \end{equation*}  where $\dashint_I$  refers to the unweighted average and $\|A\|$ is the standard matrix norm of an $n \times n$ matrix $A$.  Clearly this is a condition that reduces to the classical Muckenhoupt two weight A${}_p$ condition in the scalar setting (when $n = 1$). If $U = V$ then we will say $U$ is a matrix A${}_p$ weight if $\br{U}_{\Ap} := \br{U,U}_{\Ap} < \infty$.   While it is known that most ``classical" operators from harmonic analysis (such as the maximal function, Calder\'{o}n-Zygmund operators,  paraproducts,  martingale transforms, square functions, etc.) are bounded on $L^p(U)$ for matrix A${}_p$ weights $U$, it is difficult to determine the sharp dependence of such operators on $\br{U}_{\Ap}$.

In fact, the only two such operators where sharp one weighted matrix weighted norm inequalities for $p = 2$ are known are for the dyadic square function, which was recently proved in \cite{HPV} and the maximal function, which was proved in \cite{IKP} by slightly modifying the ideas in \cite{CG}.   Furthermore, among these two operators, sharp one matrix weighted A${}_p$ bounds for $1 < p < \infty$ are only known for the maximal function, which were proved in \cite{IM} by slightly modifying the ideas in \cite{Go}.

The purpose of this paper is to prove sharp strong type matrix weighted norm inequalities for the dyadic square function in the range $1 < p \leq 2$, providing the first sharp $p \neq 2$ estimates for a singular operator in the matrix weighted setting.  Let $\D$ be a dyadic grid and let $\{h_J ^k\}$ for $k = 1, \ldots, 2^d - 1$ and $J \in \D$ be any Haar system on $\Rd$, meaning that $\{h_J ^k\}$ is an orthonormal system of $L^2(\Rd)$ with $h_J ^k$ supported on $J, \int_J h_J ^k(x) \, dx  = 0 $, and each $h_J ^k$ is constant on dyadic subcubes of $J$. Also, for a function $\V{f} : \Rd \rightarrow \Cn$ let \begin{equation*} \V{f}_J ^k := \int_J \V{f}(x) h_J ^k (x) \, dx \end{equation*}
and define the matrix weighted dyadic square function $S_U \V{f} = S_{U, p} \V{f}$ by \begin{equation} S_{U, p} \V{f}(x) := \left( \sum_{J, k} \frac{\abs{U^\frac{1}{p} (x)  \V{f}_J ^k}^2 1_J (x) }{|J|} \right)^\frac{1}{2}. \label{SquareFunction} \end{equation}   For notational ease we will omit the dependence of $h_J ^k$ on $k$ and presume all sums involving Haar functions are taken over $k = 1, \ldots, 2^d - 1$.  Note that this operator is a natural substitute for the dyadic square function in the sense that if $S_d$ is the ordinary dyadic square  function on scalar valued functions then \begin{equation*} \norm{S_d f}_{L^p(v) \rightarrow L^p(u)}  = \norm{S_{u, p} f}_{L^p (v) \rightarrow L^p}.\end{equation*}

  To state our main result we need the following definition.    We say that a matrix weight $U$ is matrix A${}_p ^\text{wk}$ if \begin{equation*} [U]_{\Apwk} := \sup_{\V{e} \in \Cn} \norm{\abs{U^\frac{1}{p} \V{e}}^p }_{\text{A}_\infty} < \infty. \end{equation*}  It is easy to show (see \cite{Go} for example) that a matrix A${}_p$ weight is also a matrix A${}_p ^\text{wk}$ weight with $[U]_{\Apwk} \leq  [U]_{\Ap}$ and clearly in the scalar setting we have $[u]_{\Apwk} = [u]_{\text{A}_\infty}$. Our first result is the following.

  \begin{thm} \label{MainThmNormSquareFunction} If $U, V$ is a pair of matrix A${}_p$ weights, $V^{-\frac{p'}{p}}$ is a matrix A${}_{p'} ^ \text{wk}$ weight, and $1 < p \leq 2$  then the sharp estimate\begin{equation*} \|S_{U, p} \|_{L^p (V) \rightarrow L^p} \lesssim \br{U,V}_{\Ap}^\frac{1}{p}  [V^{-\frac{p'}{p}}]_{\Apprimewk} ^\frac{1}{p} \end{equation*} holds.   \\

  Furthermore, if $2 < p < \infty$ we have the following (most likely not sharp) estimate \begin{equation*} \|S_{U, p} \|_{L^p(V) \rightarrow L^p} \lesssim \br{U,V}_{\Ap}^\frac{1}{p}  \br{V^{-\frac{p'}{p}}}_{\Apprimewk} ^\frac{1}{p} \br{U}_{\Apwk} ^{\frac{1}{2} - \frac{1}{p}} \end{equation*} \end{thm}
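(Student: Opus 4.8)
The whole proof rests on a \emph{pointwise} sparse domination of $S_{U,p}$ that refines the integrated $p=2$ estimate of \cite{HPV}. The first and main step is to show that for every bounded, compactly supported $\vec f$ and every dyadic grid there is a sparse family $\mathcal S$, depending on $\vec f$, with
\[
S_{U,p}\vec f(x)\ \lesssim\ \Big(\sum_{Q\in\mathcal S}\bigl|U^{1/p}(x)\,\langle\vec f\rangle_Q\bigr|^{2}\,1_Q(x)\Big)^{1/2},
\qquad \langle\vec f\rangle_Q:=\dashint_Q\vec f .
\]
I would run a Calder\'on--Zygmund stopping--time decomposition: declare a stopping descendant of a cube $Q$ as soon as $\dashint|\vec f|$ or an appropriate reducing--operator norm roughly doubles, the latter so that (off a subset of $Q$ of small relative measure, as in \cite{HPV}) $U^{1/p}(x)$ is comparable to the reducing operator $\mathcal U_Q$, $|\mathcal U_Q\vec e|^{p}\simeq\dashint_Q|U^{1/p}(y)\vec e|^{p}\,dy$; then, over each stopping layer, telescope the Haar projections $\sum_k\vec f_I^k h_I^k$ and bound the square of their sum by $|\mathcal U_Q\langle\vec f\rangle_Q|$, using that between consecutive stopping cubes the martingale values $\langle\vec f\rangle_I$ stay comparable to $\langle\vec f\rangle_Q$. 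Upgrading the integrated $L^2$ estimate of \cite{HPV} to a pointwise bound valid uniformly in $p$ is the delicate point, and I expect it to be the main obstacle.

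Granting the pointwise bound, the range $1<p\le2$ is short. Since $p/2\le1$ we have $(\sum_Q a_Q1_Q)^{p/2}\le\sum_Q a_Q^{p/2}1_Q$ pointwise, so
\[
\|S_{U,p}\vec f\|_{L^{p}}^{p}\ \lesssim\ \sum_{Q\in\mathcal S}\int_Q\bigl|U^{1/p}(x)\langle\vec f\rangle_Q\bigr|^{p}\,dx
\ =\ \sum_{Q\in\mathcal S}|Q|\,\bigl|\mathcal U_Q\langle\vec f\rangle_Q\bigr|^{p}.
\]
Choosing pairwise disjoint $E_Q\subseteq Q$ with $|E_Q|\ge\tfrac12|Q|$, the right side is at most $2\sum_{Q\in\mathcal S}|E_Q|\,|\mathcal U_Q\langle\vec f\rangle_Q|^{p}\le 2\int_{\Rd}(M_U'\vec f)^{p}$, where $M_U'\vec f(x):=\sup_{Q\ni x}|\mathcal U_Q\langle\vec f\rangle_Q|$ is the matrix weighted dyadic maximal operator. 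It then suffices to invoke the sharp two matrix weighted maximal function estimate
\[
\|M_U'\|_{L^{p}(V)\to L^{p}}\ \lesssim\ \br{U,V}_{\Ap}^{1/p}\,[V^{-\frac{p'}{p}}]_{\Apprimewk}^{1/p},
\]
which is the two--weight, mixed $A_p$--$A_\infty$ form of the sharp matrix maximal bounds of \cite{IM,Go} (its scalar shadow being $\|M\|_{L^{p}(v)\to L^{p}}\lesssim(\br{u,v}_{\Ap}[v^{1-p'}]_{A_\infty})^{1/p}$). This yields precisely the claimed estimate.

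For $2<p<\infty$ the same pointwise sparse bound holds, but the convexity step above is unavailable. One can instead write $\|S_{U,p}\vec f\|_{L^{p}}^{p}=\|(S_{U,p}\vec f)^{2}\|_{L^{p/2}}^{p/2}$, test the $L^{p/2}$ norm against $0\le\phi$ with $\|\phi\|_{L^{(p/2)'}}\le1$, and apply the $p=2$ sparse domination to the matrix weight $U^{2/p}\phi$; H\"older in the scale $(p/2,(p/2)')$ peels off $\br{U,V}_{\Ap}^{2/p}$ together with factors $\langle\phi^{(p/2)'}\rangle_Q^{1/(p/2)'}$ and reduces everything to a sparse, Carleson--type sum built from the $V$--data. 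Closing that sum requires one further $A_\infty$--type ingredient, essentially a reverse H\"older comparison between the exponent--$2$ and exponent--$p$ averages of $U$ and hence quantified by $\br{U}_{\Apwk}$; bookkeeping of exponents then produces the extra factor $\br{U}_{\Apwk}^{1/2-1/p}$. Since this replacement of the quadratic square--function sum by a linear sparse sum is lossy, no sharpness is claimed in this range; alternatively one may pass from $p\le2$ to $p>2$ by matrix weighted extrapolation, with the same loss.

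Finally, sharpness of the $1<p\le2$ bound follows from the scalar case: on diagonal matrix weights built from scalar weights the upper bound collapses to $\br{u,v}_{\Ap}^{1/p}[v^{1-p'}]_{A_\infty}^{1/p}$, and the usual power--weight extremizers for the dyadic square function saturate the two exponents $1/p$ simultaneously. In sum, past the pointwise sparse domination everything reduces to an elementary convexity inequality, the sparseness of $\mathcal S$, and the already--available sharp matrix maximal function estimate; obtaining that sparse domination for general $p$ is where the real work lies.
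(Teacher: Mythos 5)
Your overall architecture (pointwise sparse domination, then a convexity step for $p\le 2$, then an $A_p$--$A_\infty$ maximal-type estimate) parallels the paper, but there are two genuine gaps. First, the sparse domination you state is both unproved and stronger than what a stopping-time argument yields: you put the \emph{vector} average inside, $\bigl|U^{1/p}(x)\langle\vec f\rangle_Q\bigr|$, and claim it follows from telescoping Haar projections over stopping layers, "bounding the square of their sum by $|\MC{U}_Q\langle\vec f\rangle_Q|$." The natural output of the stopping argument (and what the paper actually proves in Theorem \ref{SparseDomination}) is the larger quantity $\innp{|\MC{U}_Q\vec f|}_Q\,\|U^{1/p}(x)\MC{U}_Q^{-1}\|$, i.e. the averaged \emph{modulus} of $\MC{U}_Q\vec f$, obtained via the weak $(1,1)$ bound for the scalar dyadic square function applied to $\MC{U}_Q\vec f\,1_Q$; no exceptional-set comparability of $U^{1/p}(x)$ with $\MC{U}_Q$ as in \cite{HPV} is needed or used. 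Cancellation makes the signed/vector-average form far from automatic, and since you yourself flag this step as "the main obstacle," the proposal does not actually contain the key lemma. (Your subsequent $p\le 2$ reduction would work just as well from the modulus-average form, so the stronger claim buys you nothing.)

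Second, for $1<p\le2$ you reduce to a matrix-weighted maximal operator and then \emph{invoke} the sharp two-weight mixed bound $\|M_U'\|_{L^p(V)\to L^p}\lesssim \br{U,V}_{\Ap}^{1/p}[V^{-\frac{p'}{p}}]_{\Apprimewk}^{1/p}$ as "the two-weight, mixed form of \cite{IM,Go}." That estimate is not in those references: Goldberg's bounds are qualitative and \cite{IM} gives the sharp \emph{one-weight} $\Ap$ bound; the two-weight mixed $\Ap$--$\text{A}_\infty$ matrix estimate with the weak constant of $V^{-p'/p}$ is precisely the kind of statement that needs proof, and proving it is essentially the analytic content of the paper's Theorem \ref{MaxEmbThm}: a corona/stopping-moment decomposition using the two stopping conditions \eqref{StopOne}--\eqref{StopTwo}, the sharp reverse H\"older inequality with $\epsilon\approx[V^{-\frac{p'}{p}}]_{\Apprimewk}^{-1}$, and the scalar Carleson embedding with the $L^{1+\delta}$ maximal bound. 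So the work you outsource is the work that remains. Finally, your $p>2$ sketch is too vague to assess ("bookkeeping of exponents"), and the fallback you offer --- passing from $p\le2$ to $p>2$ by matrix-weighted extrapolation --- is unavailable: as the paper emphasizes, Rubio de Francia extrapolation is an open problem for matrix weights. The paper instead handles $p>2$ by a direct duality argument on the sparse operator with two reverse H\"older applications (one for $V^{-p'/p}$, one for $U$), which is where the factor $\br{U}_{\Apwk}^{\frac12-\frac1p}$ comes from.
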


  Note that this was proved when $p = 2$ in \cite{HPV} in the one weighted case and that sharpness when $1 < p \leq 2$ follows from the well known sharpness in the scalar setting (see \cite{LL,HL}). Also, note that while it is unlikely that Theorem \ref{MainThmNormSquareFunction} is sharp when $p > 2$, it is a natural bound and in fact we will recover from the proof of Theorem \ref{MainThmNormSquareFunction} the current best mixed matrix weighted A${}_p$ - A${}_\infty$ bound for a positive sparse operator $\tilde{\MC{S}}_{U}$ from \cite{CIM} (and thus the current best bound for CZOs via the sparse convex body domination theorem from \cite{NPTV}), namely \begin{equation*} \norm{\tilde{\MC{S}_{U}}}_{L^p \rightarrow L^p} \lesssim \br{U,V}_{\Ap} ^\frac{1}{p} \br{V^{-\frac{p'}{p}}} _{\Apprimewk} ^\frac{1}{p} \br{U}_{\Apwk} ^\frac{1}{p'}   \end{equation*} (see p. 9 for the definition of a positive Sparse operator $\tilde{\MC{S}}_{U}$).

We will now outline the arguments used to prove our main result.  In the next section, we will modify the stopping time ideas from \cite{HPV} to prove a sparse domination of $S_{U, p}\V{f}(x)$ for all matrix weights $U,  \ 1 < p < \infty,$ and $\V{f}$ measurable.  Note that unlike in \cite{HPV} which proved the sparse domination of an integrated version of $S_{U,2} \V{f}$, we will actually prove a sparse \textit{pointwise} domination of $S_{U, p}\V{f}(x)$.  In the third section we will prove Theorem \ref{MainThmNormSquareFunction} by ``matrixizing" some of the ideas in \cite{C} to prove a matrix weighted Carleson embedding type theorem.  Of particular novelty here is that we will use a matrix weighted ``stopping moment"  decomposition, which to the author's knowledge is the first time such an argument in the matrix weighted setting has appeared. Note that a similar matrix weighted parallel corona decomposition argument should be possible (which in fact was used to prove a sharp version of Theorem \ref{MainThmNormSquareFunction} in the scalar $p  > 2$ setting in \cite{LL}).

We will end this paper with an important point.  First, as of the date of writing this paper, it is unknown whether the Rubio de Francia extrapolation theorem holds.  Namely, it is not known whether the boundedness of an operator $T$ on $L^2(U)$ for all matrix A${}_2$ weights $U$ implies the boundedness of $T$ on $L^p(U)$ for all matrix A${}_p$ weights $U$ and all $1 < p < \infty.$  Thus, unlike in the scalar setting, sharp estimates (or even just boundedness) of operators for $p \neq 2$ do not at this moment follow from sharp estimates of operators for $p  = 2$.

\section{Sparse domination of square functions}
Before we state the main result of this section we will need to introduce some definitions and notation.  First we will introduce the concept of a reducing matrix, whose importance was emphasized in \cite{Go} and which has since shown to be vital in the theory of matrix weighted norm inequalities.  Namely, for a matrix weight $U$, a cube $I$, and   $\V{e} \in \Cn$ there exists positive definite matricies $\MC{U}_I, \MC{U}_I '$ where \begin{equation*} \abs{\MC{U}_I \V{e}} \approx \pr{\dashint_I \abs{U^\frac{1}{p} (x) \V{e}}^p \, dx }^\frac{1}{p}, \qquad \abs{\MC{U}_I ' \V{e}} \approx \pr{\dashint_I \abs{U^{-\frac{1}{p}} (x) \V{e}}^{p'} \, dx }^\frac{1}{p'} \end{equation*} where the implicit constant depends only on $n$.  In particular, it is easy to see that \begin{equation*} \br{U,V}_{\Ap} \approx \sup_{\substack{I \subseteq \Rd \\ I \text{ is a cube }}} \norm{\MC{U}_I \MC{V}_I '}^p. \end{equation*}  \noindent Now let $\{\V{e}_j\}_{j = 1} ^n$ be any orthonormal basis of $\Cn$.  We will then use the following simple estimate without further mention throughout the rest of the paper:  If $A$ is any $n \times n$ matrix then \begin{equation*} \|\MC{U} _I A\| ^p \approx \sum_{j = 1}^n \abs{\MC{U} _I A \V{e}_j} ^p \approx \sum_{j = 1}^n \dashint_I \abs{U^\frac{1}{p} (x) A \V{e}_j} ^p \, dx \approx  \dashint_I \norm{U^\frac{1}{p} (x) A} ^p \, dx. \end{equation*}

 Let $\D$ be a dyadic grid.  A collection $\L$ of dyadic cubes in $\D$ is sparse if

\begin{equation*} \bigcup_{\substack{L  \varsubsetneq J \\ L, J \in \L}} |L| \leq \frac12 |J|. \end{equation*} See \cite{LN} or \cite{NPTV} for more properties of sparse collections. \\

Given a sparse collection $\L$, define the ``sparse positive operator" $\tilde{S}_{U, \L} = \tilde{S}_{U,  p, \L}$ by \begin{equation} \tilde{S}_{U, \L} \V{f} (x) := \left( \sum_{L \in \L}  \innp{|\MC{U}_L  \V{f}|} _L ^2 \norm {U^\frac{1}{p} (x) \MC{U}_L^{-1} } ^2 1_L (x) \right)^\frac{1}{2} \label{PositiveSparse}\end{equation} where $\innp{ }_L$ denotes the unweighted average over $L$.   Furthermore, for any $J \in \L$ define the localized sparse positive operator $ \tilde{S}_{U, J, \L}$ by \begin{equation*} \tilde{S}_{U, J, \L} \V{f} (x) := \left( \sum_{\substack{L \in \L \\ L \subseteq J}}  \innp{|\MC{U}_L  \V{f}|} _L ^2 \norm{U^\frac{1}{p} (x) \MC{U}_L^{-1} } ^2 1_L (x) \right)^\frac{1}{2}\end{equation*} and similarly define $S_{U, J} \V{f}$ by \begin{equation*} S_{U, J}  \V{f}(x) = \left( \sum_{\substack{L \in \D  \\ L \subseteq J }} \frac{\abs{U^\frac{1}{p} (x)  \V{f}_L}^2 1_L (x) }{|L|} \right)^\frac{1}{2}\end{equation*} Finally, for $N \in \mathbb{N}, $ define

\begin{equation} \label{SquareFunctionN} S_{U, N}  \V{f}(x) = \left( \sum_{\substack{L \in \D  \\ 2^{-N} \leq \ell(L) \leq 2^N }} \frac{\abs{U^\frac{1}{p} (x)  \V{f}_L}^2 1_L (x) }{|L|} \right)^\frac{1}{2}\end{equation} where $\ell(L)$ is the sidelength of $L$, and define $S_{U, J, N}$ in an analogous way.

The main result of this section is the following.
 \begin{thm} \label{SparseDomination} Let $S_{U, N} \V{f}$ denote the matrix weighted square function defined by \eqref{SquareFunctionN}.  Then there exists a sparse collection $\L$ of dyadic cubes where for a.e. $x \in \Rd$ we have $S_{U, N} \V{f}(x) \lesssim  \tilde{S}_{U, \L} \V{f}(x)$ with implicit constant independent of $\V{f}, x, N, $ and $ U$.   \end{thm}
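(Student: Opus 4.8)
The plan is to adapt the stopping-time construction of \cite{HPV} to produce the sparse family $\L$, but to carry it out so that it yields a \emph{pointwise} rather than an integrated bound. I would fix the truncation parameter $N$ (so that all sums over $\D$ are finite and every quantity below is well defined) and build $\L$ by a greedy recursion. Starting from a cube $Q_0 \in \D$ with $2^{-N} \le \ell(Q_0) \le 2^N$, call $Q_0$ a stopping cube, and given a stopping cube $J$ declare its children in $\L$ to be the maximal dyadic $L \subsetneq J$ for which the ``averaged local energy'' of $\V f$ on $L$, measured through the reducing matrix $\MC{U}_L$, is large compared to its value at the level of $J$; concretely something like
\begin{equation*}
\innp{\abs{\MC{U}_L \V f}}_L > C \, \innp{\abs{\MC{U}_J \V f}}_J
\end{equation*}
for a dimensional constant $C$ to be fixed. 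A Chebyshev/$L^1$ argument against the $L^1$-average of $\abs{\MC{U}_J \V f}$ over $J$ shows $\sum_{L} |L| \le \tfrac12 |J|$, which is exactly the sparseness condition as stated in the excerpt (note the paper uses the disjointified-children form, so this is precisely what is needed).

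The heart of the matter is the pointwise estimate. Fix $x$ and a stopping cube $J$ containing $x$ such that the next stopping cube below $J$ does not contain $x$ — i.e., $x$ lies in the ``sparse corridor'' $E_J := J \setminus \bigcup\{L \in \L : L \subsetneq J \text{ maximal}\}$. For every dyadic $L$ with $x \in L \subseteq J$ that is \emph{not} a stopping cube, the stopping condition fails, so $\innp{\abs{\MC{U}_L \V f}}_L \lesssim \innp{\abs{\MC{U}_J \V f}}_J$. The key algebraic point, which is where the reducing matrices earn their keep, is to bound the single Haar coefficient term $\abs{U^{\frac1p}(x)\V f_L}^2 / |L|$ by the square-function increment of the \emph{averages}: writing $\V f_L^k$ in terms of the difference of $\V f$-averages over $L$ and its children, one gets
\begin{equation*}
\frac{\abs{U^{\frac1p}(x) \V f_L}}{|L|^{1/2}} \lesssim \norm{U^{\frac1p}(x)\MC{U}_L^{-1}} \cdot \frac{\abs{\MC{U}_L \V f_L}}{|L|^{1/2}} \lesssim \norm{U^{\frac1p}(x)\MC{U}_L^{-1}}\, \innp{\abs{\MC{U}_L \V f}}_L,
\end{equation*}
using $\abs{\MC{U}_L \V e} \approx (\dashint_L \abs{U^{1/p}(y)\V e}^p)^{1/p}$ and the comparability $\abs{U^{\frac1p}(x)\V v} = \abs{U^{\frac1p}(x)\MC{U}_L^{-1}(\MC{U}_L\V v)} \le \norm{U^{\frac1p}(x)\MC{U}_L^{-1}}\abs{\MC{U}_L \V v}$. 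Summing the geometric-type series over the non-stopping $L$ between $x$ and $J$ — the averages $\innp{\abs{\MC{U}_L\V f}}_L$ are all $\lesssim \innp{\abs{\MC{U}_J\V f}}_J$ and the matrix norms $\norm{U^{1/p}(x)\MC{U}_L^{-1}}$ can be controlled (their squares sum by a standard reducing-matrix/$\Ap$-free nesting estimate, or else one simply keeps them and absorbs everything into the $L=J$ term using $\MC{U}_L \preceq \MC{U}_J$ up to constants along the chain) — one obtains
\begin{equation*}
\sum_{\substack{L \in \D,\ L \subseteq J \\ x \in L,\ L \text{ not stopping}}} \frac{\abs{U^{\frac1p}(x)\V f_L}^2}{|L|} \lesssim \innp{\abs{\MC{U}_J \V f}}_J^2 \,\norm{U^{\frac1p}(x)\MC{U}_J^{-1}}^2 .
\end{equation*}
Finally, decomposing the full sum in $S_{U,N}\V f(x)^2$ over the tree of stopping cubes containing $x$ — each non-stopping $L$ is attributed to the unique stopping cube $J$ with $x \in E_J \cap L$, $L \subseteq J$, and the stopping cubes themselves are members of $\L$ and contribute their own term directly — gives $S_{U,N}\V f(x)^2 \lesssim \sum_{L \in \L,\, x \in L} \innp{\abs{\MC{U}_L \V f}}_L^2 \norm{U^{1/p}(x)\MC{U}_L^{-1}}^2 = \tilde S_{U,\L}\V f(x)^2$, which is the claim; uniformity in $N$ is clear since the stopping constant $C$ and all implicit constants depend only on $n, p, d$.

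The main obstacle I anticipate is the control of the accumulated reducing-matrix norms $\norm{U^{1/p}(x)\MC{U}_L^{-1}}$ along a non-stopping chain: in the scalar case these telescope trivially, but in the matrix setting one must argue that the operators $\MC{U}_L^{-1}$ along the chain are comparable in a way compatible with the left multiplication by $U^{1/p}(x)$, and this is exactly the place where the $p=2$ argument of \cite{HPV} used the integrated formulation to sidestep the issue. The fix is to not demand that these norms be uniformly bounded but to let the stopping condition be stated in terms of the reducing-matrix averages so that \emph{consecutive} reducing matrices are comparable, $\MC{U}_{L} \approx \MC{U}_{L'}$ whenever $L' \subseteq L$ with no stopping cube strictly between them; then $\norm{U^{1/p}(x)\MC{U}_L^{-1}} \approx \norm{U^{1/p}(x)\MC{U}_J^{-1}}$ uniformly along the chain and the sum over the chain is genuinely geometric with ratio controlled by $C$, which can be taken large. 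Getting this comparability statement precisely right — in particular making sure the stopping rule simultaneously delivers sparseness \emph{and} reducing-matrix comparability — is the delicate part; everything else is bookkeeping over the dyadic tree.
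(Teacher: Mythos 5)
Your stopping rule is the wrong one, and the gap appears exactly where you flag it: the summation along a chain of non-stopping cubes. Under your condition $\innp{\abs{\MC{U}_L \V f}}_L > C\,\innp{\abs{\MC{U}_J \V f}}_J$, all you know about a non-stopping cube $L$ is the upper bound $\innp{\abs{\MC{U}_L \V f}}_L \lesssim \innp{\abs{\MC{U}_J \V f}}_J$; combined with your (correct) single-term estimate $\abs{U^{1/p}(x)\V f_L}^2/|L| \lesssim \norm{U^{1/p}(x)\MC{U}_L^{-1}}^2 \innp{\abs{\MC{U}_L\V f}}_L^2$, every term in the chain between $x$ and $J$ is bounded by roughly the same quantity, and nothing makes the series geometric: comparability of consecutive reducing matrices (your proposed fix) makes the terms \emph{comparable}, i.e.\ of constant size along the chain, not decaying. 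Since the chain can have length $\sim N$ (take $\V f$ with Haar coefficients spread evenly over the scales, so that averages stay comparable and no stopping ever occurs), the chain sum exceeds the single-term bound by an unbounded factor and the claimed pointwise inequality fails with a constant independent of $N$. A secondary issue is the sparseness claim itself: your stopping condition compares averages taken with \emph{different} reducing matrices ($\MC{U}_L$ on the left, $\MC{U}_J$ on the right), so the Chebyshev argument against $\int_J\abs{\MC{U}_J\V f}$ does not go through without an additional stopping condition of the type $\norm{\MC{U}_L\MC{U}_J^{-1}} > \lambda$ (which is how the paper handles the analogous point in Section 3, for a different purpose).

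The paper avoids all of this by stopping on partial sums of the square function rather than on averages: $\J(J)$ consists of the maximal $L\subseteq J$ with $\sum_{J\supseteq I\supseteq L}\abs{\MC{U}_J\V f_I}^2/|I| > \lambda \innp{\abs{\MC{U}_J\V f}}_J^2$, where the \emph{fixed} top reducing matrix $\MC{U}_J$ is used throughout. Sparseness then follows from the weak $(1,1)$ bound of the unweighted dyadic square function applied to $\MC{U}_J 1_J\V f$, and the pointwise bound for the non-stopping part is immediate: for $x$ not absorbed into a stopping cube, the \emph{entire} chain sum $\sum \abs{\MC{U}_J\V f_L}^2/|L|$ is at most $\lambda\innp{\abs{\MC{U}_J\V f}}_J^2$ by the very definition of maximality, and one only needs the single factorization $\abs{U^{1/p}(x)\V f_L}\le\norm{U^{1/p}(x)\MC{U}_J^{-1}}\abs{\MC{U}_J\V f_L}$ with the top matrix $J$ — no comparability of reducing matrices along the chain and no geometric series are needed. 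If you want to salvage your outline, replace the average-based stopping condition by this square-function-partial-sum condition (relative to the fixed $\MC{U}_J$); the rest of your bookkeeping over the tree of stopping cubes then goes through essentially as the paper's iteration does.
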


 \begin{proof} Let $S_d$ denote the unweighted dyadic square function with respect to $\D$. It is then enough to show that for each $J \in \D$ we can find a sparse collection $\L$ of dyadic subcubes of $J$ where $S_{U,  J, N}  \V{f}(x) \lesssim  \tilde{S}_{U, J, \L} \V{f}(x)$, since then we can apply this to each $J \in \D$ with $\ell(J) = 2^N$.   Let $\J(J)$ be the maximal dyadic subcubes $L$ of $J$ where  \begin{equation*} \sum_{ J \supseteq I \supseteq L} \frac{|\MC{U}_J \V{f}_I|^2}{|I|} > \lambda \innp{|\MC{U}_J\V{f}|}_J ^2 \end{equation*}
 We claim that $\sum_{L \in \J(J)} |L| \leq \frac14 |J|$ for large enough $\lambda$.  For that matter,  if $x \in L \in \J(J)$ then \begin{align*} S_d (\MC{U}_J 1_J \V{f}) (x) ^ 2  & = \sum_{I \in \D} \frac{|\MC{U}_J (1_J\V{f})_I|^2}{|I|} 1_I (x)
 \\ & \geq \sum_{J \supseteq I \supseteq L} \frac{|(\MC{U}_J \V{f} 1_J)_I|^2}{|I|}
 \\ & =  \sum_{ J \supseteq I \supseteq L} \frac{|\MC{U}_J \V{f}_I|^2}{|I|}
 \\ & > \lambda \innp{|\MC{U}_J \V{f}|}_J ^2 \end{align*} so that using the fact that $\|S_d\|_{L^{1} \rightarrow L^{1, \infty}} \leq C$ we get \begin{equation*} \sum_{L \in \J_1 (J)} |L| = \abs{\bigcup_{L \in \J_1(J)} L }  \leq |\{x : S_d(\MC{U}_J 1_J \V{f} ) (x) \geq \lambda ^\frac12 \innp{|\MC{U}_J \V{f}|}_J \}| \leq \frac{C}{\lambda^\frac12} |J| \end{equation*} which clearly proves the claim.

 Let $\F(J)$ denote the collection of all $L \in \D(J)$ such that $L \not \subseteq Q$ for any $Q \in \J(J)$. Furthermore, abusing notation slightly, we will denote $\cup_{Q \in \J(J)} Q$ by $\cup\J(J)$. Fix $x \in \J$ such that $U(x)$ is defined. Then \begin{align*} \sum_{L \in \D(J)}  \frac{\abs{U^\frac{1}{p} (x) \V{f}_L }^2 1_L (x) }{|L|}
  & = \sum_{L \in \F(J)} \frac{\abs{U^\frac{1}{p} (x) \V{f}_{L} } ^2 1_{L  } (x)}{|L|}
+ \sum_{Q \in \J(J)} \sum_{L \in \D(Q)}  \frac{\abs{U^\frac{1}{p} (x) \V{f}_L }^2 1_L (x) }{|L|}
\\ & = A(x) + \sum_{Q \in \J(J)} \sum_{L \in \D(Q)}  \frac{\abs{U^\frac{1}{p} (x) \V{f}_L }^2 1_L (x) }{|L|}\end{align*}

  We estimate $A(x)$ by considering two cases.  First assume $x \in \cup\J(J).$ Thus, if $x \in I$ for some $I \in \J(J)$ and $x \in L \in \F(J)$ then again by definition of $\F(J)$ we have $J \supseteq L \varsupsetneq I$  so that

  \begin{align*} A(x) &  \leq \norm{ U^\frac{1}{p} (x)\MC{U}_J ^{-1} }^2 1_J (x)  \sum_{J \supseteq L \varsupsetneq I} \frac{\abs{ \MC{U}_J \V{f}_{L} } ^2 }{|L|}
  \\ & \leq  \lambda \norm{ U^\frac{1}{p} (x) \MC{U}_J ^{-1}}^2 1_J (x) \innp{ | \MC{U}_J \V{f} |}_J ^2. \end{align*}

  On the other hand, if $x \not \in  \cup \J(J)$  then we can pick a sequence of nested dyadic cubes $\{L_k ^x\} = \{L \in \F(J) : x \in L\} = \{L \in \D(J) : x \in L\}$.    However, if \begin{equation*} \sum_k  \frac{|\MC{U}_J \V{f}_{L_k ^x} |^2}{|L_k^x|} > \lambda \innp{|\MC{U}_J\V{f}|}_J ^2 \end{equation*} then obviously for some $k'$ we must have

  \begin{equation*} \sum_{ J \supseteq L \supseteq L_{k'} ^ x } \frac{|\MC{U}_J \V{f}_L|^2}{|L|} > \lambda \innp{|\MC{U}_J\V{f}|}_J ^2 \end{equation*} which means $x \in L_{k'} ^x \subseteq I$ for some $I \in \J(J)$.  Thus,

  \begin{align*}  A(x) & \leq \norm{U^\frac{1}{p} (x) \MC{U}_J ^{-1}}^2  \sum_{L \in \F(J)} \frac{\abs{\MC{U}_J  \V{f}_L   }^2 1_{L}  (x)}{|L|}
  \\ & \leq  \norm{U^\frac{1}{p} (x) \MC{U}_J ^{-1}} ^2 \sum_{k}   \frac{\abs{\MC{U}_J  \V{f}_{L_k ^x}    }^2 }{|L_k ^x|}
  \\ & \leq \lambda  \norm{U^\frac{1}{p} (x) \MC{U}_J ^{-1}}^2 \innp{|\MC{U}_J\V{f}|}_J ^2 1_J (x)
  \end{align*}

Putting this together, we get \begin{align*} \sum_{L \in \D(J)}  \frac{\abs{U^\frac{1}{p} (x) \V{f}_L }^2 1_L (x) }{|L|}  \leq \lambda    \norm{U^\frac{1}{p} (x) \MC{U}_J ^{-1}}^2 \innp{|\MC{U}_J \V{f}|}_J ^2 1_J (x) + \sum_{Q \in \J(J)} \sum_{L \in \D(Q)}  \frac{\abs{U^\frac{1}{p} (x) \V{f}_L }^2 1_L (x) }{|I|}. \end{align*} Finally set $\J_0(J) = \{J\}$ and for $k \in \mathbb{N}$ set $\J_k(J) = \{L \in \J(Q) : Q \in \J_{k-1}(J)\}$.  If $\L  = \cup_k \J_k(J)$ then $\L$ is sparse, which by iteration completes the proof of Theorem \ref{SparseDomination}
 \end{proof}

\section{Proof of Theorem \ref{MainThmNormSquareFunction}} \label{NormSquareSection}

In this section we will prove Theorem \ref{MainThmNormSquareFunction} by utilizing Theorem \ref{SparseDomination}.   Again fix $\lambda > 1$ to be determined momentarily.  Given $Q \in \D$ let $\MC{G}(Q)$ denote the set of maximal $L \in \D(J)$ such that either \begin{equation}  \dashint_L |\MC{U}_J  \vec{f}|> \lambda \dashint_J|\MC{U}_J \vec{f}| \label{StopOne} \end{equation} or \begin{equation} \| \MC{U}_L \MC{U}_J ^{-1} \| > \lambda \label{StopTwo}\end{equation}   We now prove that for $\lambda > 0$ large enough we have that \begin{equation} \label{sparseineq} \sum_{L \in \MC{G}(J)} |L| \leq \frac14 |J| \end{equation}

Let $\J_1(J)$ and $\J_2(J)$ denote those maximal cubes in $J$ satisfying \eqref{StopOne} and \eqref{StopTwo}, respectively.  Then, as usual,  \begin{equation*} \sum_{L \in \J_1(J)} |L| \lesssim \frac{1}{\lambda^p } \sum_{L \in \J_1(J)} \int_L \norm{U^\frac{1}{p} (x) \MC{U}_J ^{-1}} ^p \, dx \lesssim \frac{1}{\lambda^p } \int_J \norm{U^\frac{1}{p} (x) \MC{U}_J ^{-1}} ^p \, dx \lesssim \frac{1}{\lambda^p }  |J| \end{equation*}  and furthermore \begin{align*} \sum_{L \in \J_2(J)} |L| & \leq \frac{1}{\lambda} \sum_{L \in \J_2(J)}  |L| \frac{ \dashint_L | \MC{U}_J  \vec{f}| }{\dashint_J | \MC{U}_J \vec{f}|}
\\ & \leq \frac{1 }{\lambda} |J|. \end{align*} This completes the proof for $\lambda$ large enough, since $\J(J) \subseteq \J_1(J) \cup \J_2(J)$.

Now for fixed $N \in \mathbb{N}$ let \begin{equation*} \MC{G}_0 = \{J \in \D : |J| = 2^{N}\}\end{equation*} and inductively define \begin{equation*} \MC{G}_{k + 1} = \{L \in \D : L \in \MC{G}(J) \text{ for some } J \in \MC{G}_k\}. \end{equation*} If $\MC{E}(J)$ denotes the collection of all $L' \in \D(J)$ that are not contained in any $L \in \MC{G}(J)$ and $\MC{G}$ is the union \begin{equation*} \MC{G} = \bigcup_{k = 0}^\infty \MC{G}_k \end{equation*} then we clearly have \begin{equation} \bigcup_{J \in \MC{G}} \MC{E}(J)  = \{J \in \D : |J| \leq  2^{N}\} \label{DPart}\end{equation} for $\lambda > 0$ large enough since $J \in \MC{E}(J)$ for any $J \in \D$.  Also clearly an iteration of \eqref{sparseineq} gives us that for any $Q \in \MC{G}$ \begin{equation} \sum_{L \in \MC{G} , L \subseteq Q} |Q| \leq \frac12 |L| \label{SparseProp}. \end{equation}

\noindent Furthermore, it is important to note that if $L \in \MC{E}(J)$ then both \eqref{StopOne} and \eqref{StopTwo} are false, so that \begin{equation} \dashint_L |\MC{U}_L  \vec{f}| \leq \|\MC{U}_L \MC{U}_J^{-1} \| \dashint_L |\MC{U}_J  \vec{f}| \leq \lambda ^2  \dashint_J |\MC{U}_J  \vec{f}| \label{CoronaEq} \end{equation}

We now state and prove a Carleson embedding type theorem for the type of operator used in the previous section, which will easily show Theorem \ref{MainThmNormSquareFunction}.  Given nonnegative measurable  functions $\{a_L (x) \}_{L \in \D}$  and $r > 0$, define $\tilde{S}_{U, a} \V{f} = \tilde{S}_{U, a,  r, p} \vec{f}$ by \begin{equation*} \tilde{S}_{U, a} \V{f} (x) = \left(\sum_{L \in \D} a_L (x) { \innp{\abs{   \MC{U}_L  \vec{f}}}_L ^r 1_L(x)} \right)^\frac{1}{r}. \end{equation*}

\begin{thm} \label{MaxEmbThm} Let $1 < p \leq r$, let $V^{-\frac{p'}{p}}$ be a matrix A${}_{p'} ^ \text{wk}$ weight and let
\begin{align*}\|A\|_* =  \sup_{J \in \D} \dashint_J  \left(\sum_{L \in \MC{D}(J)}  a_L (x) { 1_L(x)} \right)^\frac{p}{r}\, dx. \end{align*}

then \begin{equation*} \|\tilde{S}_{U, a} \|_{L^p(V) \rightarrow L^p} \lesssim [U, V]_{\text{A}_p} ^\frac{1}{p } \br{V^{-\frac{p'}{p}}}_{\Apprimewk} ^\frac{1}{p}   \|A\|_* ^\frac{1}{p}. \end{equation*} \label{MaxEmbThm}

\end{thm}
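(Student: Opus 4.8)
The plan is to run a duality/stopping-time argument that ``matrixizes'' the scalar Carleson embedding proof of Carleson-type from \cite{C}, using the corona decomposition $\{\MC{G}_k\}$ and the stopping cubes $\MC{G}(J)$ already constructed above. By duality, $\|\tilde{S}_{U, a}\|_{L^p(V)\to L^p}^r$ equals the supremum over $g \in L^{(p/r)'}$ with $\|g\|_{(p/r)'} = 1$ of $\int_{\Rd} \sum_{L\in\D} a_L(x)\,\langle |\MC{U}_L \vec{f}|\rangle_L^r\, 1_L(x)\, g(x)\,dx$, after replacing $\vec f$ by $V^{-1/p}\vec f$ so the target norm becomes an unweighted $L^p$ norm against an $L^p(V)$-normalized input (I would instead work directly: bound $\int (\tilde S_{U,a}\vec f)^p$ by pairing $(\tilde S_{U,a}\vec f)^p$, which is a sum of the $a_L$-terms, against $1$, and group $L$ according to the corona cube $\pi(L)\in\MC{G}$ it belongs to, i.e. the minimal cube of $\MC{G}$ containing $L$). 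The point of the corona splitting is that for $L \in \MC{E}(J)$ with $J = \pi(L)\in\MC{G}$, estimate \eqref{CoronaEq} gives $\langle |\MC{U}_L\vec f|\rangle_L \leq \lambda^2 \langle |\MC{U}_J\vec f|\rangle_J$, so that inside each corona the $\MC{U}_L$-average is controlled by a single number $\langle|\MC{U}_J\vec f|\rangle_J$ depending only on the top cube $J$.

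The key steps, in order, are: (i) decompose $\int(\tilde S_{U,a}\vec f)^p = \sum_{J\in\MC{G}} \int \sum_{L\in\MC{E}(J)} a_L(x) \langle|\MC{U}_L\vec f|\rangle_L^r 1_L(x)\,dx$ — using monotonicity $p/r\leq 1$ won't be available, so I would instead first raise to power $r$: actually I would estimate $(\tilde S_{U,a}\vec f(x))^p = \big(\sum_L a_L(x)\langle|\MC{U}_L\vec f|\rangle_L^r 1_L(x)\big)^{p/r}$ and then, since $p/r \le 1$, subadditivity over the corona decomposition gives $(\tilde S_{U,a}\vec f(x))^p \le \sum_{J\in\MC{G}}\big(\sum_{L\in\MC{E}(J)} a_L(x)\langle|\MC{U}_L\vec f|\rangle_L^r 1_L(x)\big)^{p/r}$; (ii) apply \eqref{CoronaEq} to pull out $\langle|\MC{U}_J\vec f|\rangle_J^{p}$ from the $J$-th corona term, leaving $\langle|\MC{U}_J\vec f|\rangle_J^{p}\big(\sum_{L\in\MC{E}(J)} a_L(x) 1_L(x)\big)^{p/r}$; (iii) integrate over $x\in J$ and use the definition of $\|A\|_*$ to get $\int_J\big(\sum_{L\in\MC{D}(J)}a_L(x)1_L(x)\big)^{p/r}dx \le \|A\|_*\,|J|$, yielding the bound $\|A\|_* \sum_{J\in\MC{G}} |J|\,\langle|\MC{U}_J\vec f|\rangle_J^{p}$; (iv) bound the remaining Carleson-type sum $\sum_{J\in\MC{G}}|J|\,\langle|\MC{U}_J\vec f|\rangle_J^{p}$ — here I would write $\langle|\MC{U}_J\vec f|\rangle_J \le \langle \|\MC{U}_J \MC{V}_J'\|\,|\MC{V}_J'^{-1}\vec f|\rangle_J \lesssim [U,V]_{\Ap}^{1/p}\langle |(\MC{V}_J')^{-1}\vec f|\rangle_J$, recognizing that $(\MC{V}_J')^{-1}$ comparably reproduces $V^{1/p}$ averages in $L^{p'}$-dual sense, so that $\langle|(\MC{V}_J')^{-1}\vec f|\rangle_J \lesssim \big(\dashint_J |V^{1/p}\vec f|^p\big)^{1/p}$ is essentially the localized $L^p(V)$ norm of $\vec f$; (v) use the matrix $\Apprimewk$ hypothesis on $V^{-p'/p}$ together with the sparseness \eqref{SparseProp} of $\MC{G}$ to run the actual Carleson embedding: by the $A_\infty$-property of the scalar weights $|(V^{-p'/p})^{1/p'}\vec e|^{p'}$ one controls $\sum_{J\in\MC{G}}|J|(\cdots)$ by $\br{V^{-p'/p}}_{\Apprimewk}\|\vec f\|_{L^p(V)}^p$, exactly as in the scalar square-function Carleson embedding but applied coordinate-by-coordinate against the reducing matrices.

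I expect step (v) — the genuine matrix-weighted Carleson embedding with the sharp $\Apprimewk$ power — to be the main obstacle, because one cannot simply invoke the scalar Carleson embedding theorem: the quantities $\langle|\MC{U}_J\vec f|\rangle_J$ or $\langle|(\MC{V}_J')^{-1}\vec f|\rangle_J$ are not of the form ``$\dashint_J$ of a fixed nonnegative function'', so the classical argument must be replaced by the matrix ``stopping moment'' decomposition advertised in the introduction: one builds a second layer of stopping cubes adapted to $V^{-p'/p}$ on which $\|\MC{V}'_L (\MC{V}'_J)^{-1}\|$ or the relevant scalar weight's average is under control, uses the $A_\infty$ (equivalently $\Apwk$) property to show these stopping cubes are sparse with constant depending on $\br{V^{-p'/p}}_{\Apprimewk}$, and then sums a geometric series. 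Care is needed so that the exponents on $[U,V]_{\Ap}$ and $\br{V^{-p'/p}}_{\Apprimewk}$ are each exactly $1/p$ after taking $p$-th roots at the end; tracking this through steps (iv)–(v) is the delicate bookkeeping. The remaining steps (i)–(iii) are routine once the corona structure and $\|A\|_*$ are in hand.
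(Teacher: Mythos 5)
Your steps (i)--(iii) do match the paper's argument (the paper phrases (i) via the functions $F_J$ and the embedding $\ell^r\hookrightarrow\ell^p$ rather than subadditivity of $t\mapsto t^{p/r}$, which is the same thing), and you correctly reach the reduction to bounding $\sum_{J\in\MC{G}}|J|\,\innp{|\MC{U}_J\V{f}|}_J^p$. The gap is in (iv)--(v). In (iv) you apply H\"older at the full exponent $p$ and replace $\|\MC{U}_J\MC{V}_J'\|$ by $[U,V]_{\Ap}^{1/p}$ for \emph{every} $J$, which leaves $\sum_{J\in\MC{G}}|J|\dashint_J|V^{1/p}\V{f}|^p$. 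This is an exponent-one Carleson sum of a fixed density that is not (uniformly) an $A_\infty$ weight, and such sums over a sparse family can be infinite: on $[0,1)$ take the sparse family $\{[0,2^{-k})\}_{k\ge 0}$ and a density $w\in L^1$ with $\int_0^{2^{-k}}w\approx (k+1)^{-1}$. So sparseness of $\MC{G}$ cannot close the argument, and the stopping conditions \eqref{StopOne}--\eqref{StopTwo} involve $U$ and $\V f$, not $V^{-\frac{p'}{p}}$; worse, once the dual-weight factor has been globally absorbed into $[U,V]_{\Ap}^{1/p}$, the hypothesis on $V^{-\frac{p'}{p}}$ no longer appears anywhere in your estimate, so the factor $\br{V^{-\frac{p'}{p}}}_{\Apprimewk}^{1/p}$ has no way to enter. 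Your step (v) concedes this is the crux but offers only a sketch of a second stopping time adapted to $V^{-\frac{p'}{p}}$; as set up after (iv) there is nothing left in the sum for that stopping time to act on, and no actual estimate (sparseness of the new stopping cubes, geometric summation, exponent bookkeeping) is exhibited. So the proof is incomplete at exactly the point that produces the $\Apprimewk$ power.

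The paper's mechanism at this stage is different and quite concrete, and your diagnosis that ``one cannot invoke the scalar Carleson embedding'' is what misleads you: one applies H\"older at the exponent $p-\epsilon$ rather than $p$, with $\epsilon\approx\br{V^{-\frac{p'}{p}}}_{\Apprimewk}^{-1}$ supplied by the sharp reverse H\"older inequality for the scalar $A_\infty$ weights $|V^{-\frac1p}\V e|^{p'}$. This keeps the dual factor under control, $\bigl(\dashint_J\|\MC{U}_JV^{-\frac1p}\|^{\frac{p-\epsilon}{p-\epsilon-1}}\bigr)^{\frac{p-\epsilon-1}{p-\epsilon}}\lesssim\bigl(\dashint_J\|\MC{U}_JV^{-\frac1p}\|^{p'}\bigr)^{\frac1{p'}}\lesssim[U,V]_{\Ap}^{1/p}$, while leaving $\innp{|V^{1/p}\V f|^{p-\epsilon}}_J^{p/(p-\epsilon)}$, i.e.\ an average of a \emph{fixed} nonnegative function raised to a power $1+\delta>1$. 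Now the ordinary unweighted dyadic Carleson embedding theorem (equivalently the $L^{1+\delta}$ bound for $M_d$ with norm $\approx\delta^{-1}$), applied to the Carleson sequence $\tau_J=|J|$, $J\in\MC{G}$ (Carleson by \eqref{SparseProp}), gives the sum with constant $\epsilon^{-1}$, and $\epsilon^{-1/p}\approx\br{V^{-\frac{p'}{p}}}_{\Apprimewk}^{1/p}$ is exactly the advertised factor. No genuinely matrix-weighted Carleson embedding or second corona is needed here; the missing idea in your write-up is the reverse-H\"older room in the exponent. A parallel-corona argument in the spirit of your (v) may well be workable (the introduction says as much), but it would require keeping the factors $\|\MC{U}_J\MC{V}_J'\|$ local instead of bounding them all by $[U,V]_{\Ap}^{1/p}$ at the outset, and you have not carried it out.
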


\begin{proof}  Let \begin{equation*} F_J (x) = \left(\sum_{L \in \MC{E}(J)} a_L(x) {  \innp{\abs{\MC{U}_L   \vec{f}}_L }^r}  1_L(x) \right)^\frac{1}{r}. \end{equation*}  We first get a bound for $\|F_J\|_{L^p}$.  Note that \begin{align*}
F_J (x) &= \left(\sum_{L \in \MC{E}(J)}  a_L(x) { \innp{ \abs{  \MC{U}_L  \vec{f}}_L }^r 1_L(x)} \right)^\frac{1}{r}
\\ & \lesssim  \innp{ \abs{ \MC{U}_J  \vec{f}}}_J \left(\sum_{L \in \MC{E}(J)}  a_L(x)  1_L(x)  \right)^\frac{1}{r} \end{align*} by \eqref{CoronaEq} since $L \in \MC{E}(J)$.  Thus,

\begin{align*} \|F_J\|_{L^p} ^p & \lesssim \innp{ \abs{ \MC{U}_J  \vec{f}}}_J  ^p  \int_J   \left(\sum_{L \in \MC{E}(J)}  a_L(x){1_L(x)}  \right)^\frac{p}{r} \\ & \leq \|A\|_* |J|  \innp{\abs{\MC{U}_J  \vec{f}}}_J ^p  . \end{align*}


However, if \begin{equation*} \tilde{S}_{U, a, N} \vec{f}(x) = \left(\sum_{|L| \leq 2^{N}} a_L(x) { \innp{\abs{ \MC{U}_L  \vec{f}} }_L ^r 1_L (x)} \right)^\frac{1}{r} \end{equation*} then \eqref{DPart} gives us that \begin{equation*}\tilde{S}_{U, a, N} \V{f}(x) = \left(\sum_{J \in \MC{G}} F_J ^r (x) \right)^\frac{1}{r}. \end{equation*}

Then using the fact that $p \leq r$,  \begin{align} \|\tilde{S}_{U, a, N} \V{f} \|_{L^p} & = \left\|\left(\sum_{J \in \MC{G}} F_J  ^r \right)^\frac{1}{r} \right\|_{L^p} \nonumber  \\ & \leq \left\|\left(\sum_{Q \in \MC{G}} F_Q ^p \right)^\frac{1}{p} \right\|_{L^p} \nonumber  \\ & = \left( \sum_{Q \in \MC{G}} \|F_Q \|_{L^p} ^p \right)^\frac{1}{p} \nonumber \\ & \lesssim \|A\|_*^\frac{1}{p} \left( \sum_{J \in \MC{G}} |J|  \innp{\abs{\MC{U}_J  \vec{f}}}_J ^p\right)^\frac{1}{p}. \label{MaxEmbThmEst} \end{align}

By the sharp reverse H\"{o}lder inequality for A${}_\infty$ weights, we can pick $\epsilon \approx [V^{-\frac{p'}{p}}]_{\text{A}_{p'
 }^\text{wk}}^{-1}$ small enough where

\begin{align*} \dashint_J  |\MC{U}_J  \vec{f}|  & \leq \left(\dashint_J  \| \MC{U}_J V^{-\frac{1}{p}}\|^\frac{p - \epsilon }{p - \epsilon - 1} \right)^\frac{p - \epsilon - 1 }{p - \epsilon} \left(\dashint_J |V^\frac{1}{p} \vec{f}|^{p - \epsilon}\right)^\frac{1}{p - \epsilon}
\\ & \lesssim \left(\dashint_J  \| \MC{U}_J V^{-\frac{1}{p}}  \|^{p'} \right)^\frac{1 }{p'}  \left(\dashint_J |V^\frac{1}{p} \vec{f}|^{p - \epsilon}\right)^\frac{1}{p - \epsilon}
\\ & \lesssim [U, V]_{\text{A}_p} ^\frac{1}{p} \left(\dashint_J |V^\frac{1}{p} \vec{f}|^{p - \epsilon}\right)^\frac{1}{p - \epsilon}. \end{align*}

Now, for any nonnegative scalar Carleson sequence $(\tau_Q)$, if $$\|\tau_Q\|_* = \sup_{J \in \D} \frac{1}{|J|} \sum_{L \in \D(J)} \tau_L$$ the standard proof of the (unweighted) dyadic Carleson embedding theorem and the well known ``$L^{1+\delta}$" Maximal function bound for $\delta>0$ small tells us that for $q = 1+\delta$ \begin{align*} \sum_{Q \in \D} \tau_Q \innp{|f|}_Q ^q \leq \|\tau_Q\|_* \|M_d f\|_{L^q} ^q \lesssim \delta^{-1} \|\tau_Q\|_* \|f\|_{L^q} ^q  \end{align*}  Applying this to the exponent $\frac{p}{p - \epsilon} > 1$ ,  \eqref{MaxEmbThmEst} gives us \begin{align*} \norm{ \tilde{S}_{U, a, N} \V{f}}_{L^p} & \lesssim  \|A\|_* ^\frac{1}{p} [U, V]_{\text{A}_p} ^\frac{1}{p}    \left(\sum_{J \in \MC{G}}  |J|        \innp{ |V^\frac{1}{p} \vec{f}|^{p - \epsilon}}_J ^\frac{p}{p  - \epsilon}  \right)^\frac{1}{p}
\\ & \lesssim \|A\|_*^\frac{1}{p} [U, V]_{\text{A}_p} ^\frac{1}{p}   \epsilon ^{-\frac{1}{p}}  \left(\inrd (|V ^\frac{1}{p} \vec{f}(x)|^{p - \epsilon}) ^\frac{p }{p - \epsilon} \, dx \right)^\frac{1}{p} \\ & = \|A\|_* ^\frac{1}{p} [U, V]_{\text{A}_p} ^\frac{1}{p}    [V^{-\frac{p'}{p}}]_{\text{A}_{p'} ^\text{wk}} ^{\frac{1}{p}}  \|\vec{f}\|_{L^p(V)}. \end{align*} Letting $N \rightarrow \infty$ in conjunction with the monotone convergence theorem completes the proof. \end{proof}

Finally, to see how this proves Theorem \ref{MainThmNormSquareFunction} when $1 < p \leq 2$, set $r = 2$ and let $\L $ be a sparse collection.  Set \[a_L(x) =  \begin{cases}
      \norm{U^\frac{1}{p} (x) \MC{U}_L^{-1} }^2 & L \in \L \\
      0 & L \not \in \L
   \end{cases}
\]

  Then since $\frac{p}{2} \leq 1$ \begin{align*} \sup_{J \in \MC{D}} \dashint_J  \left(\sum_{\substack {L \subseteq J  \\ L \in \L}}   { \norm{U^\frac{1}{p} (x) \MC{U}_L^{-1} }^2 1_L(x)} \right)^\frac{p}{2}\, dx  &
  \leq \sup_{J \in \MC{D}} \frac{1}{|J|}   \sum_{\substack {L \subseteq J  \\ L \in \L}}   \int_L  \norm{U^\frac{1}{p} (x) \MC{U}_L^{-1} }^p \, dx
  \\ & \lesssim \sup_{J \in \MC{D}} \frac{1}{|J|}   \sum_{\substack {L \subseteq J \\ L \in \L}}  |L|
  \\ & = \sup_{J \in \MC{D}} \frac{1}{|J|}   \sum_{L^* \subseteq J} \sum_{\substack {L \subseteq L^* \\ L \in \L}}  |L|
  \\ & \leq \frac32 \sup_{J \in \MC{D}} \frac{1}{|J|} \sum_{L^* \subseteq J} |L^*| \leq \frac32
  \end{align*}

  \noindent where here $\{L^*\}$ is the collection of maximal $L^* \in \L$ with $L^* \subseteq J$.

Thus, if $\tilde{S}_{U, \L}$ is defined as in \eqref{PositiveSparse}, then Theorem \ref{MaxEmbThm} gives us that for $r = 2$\begin{equation*} \|\tilde{S}_{U, \L} \|_{L^p(V) \rightarrow L^p} = \|\tilde{S}_{U, a}\|_{L^p(V) \rightarrow L^p} \lesssim [U, V]_{\text{A}_p} ^\frac{1}{p}    [V^{-\frac{p'}{p}}]_{\text{A}_{p'} ^\text{wk}} ^{\frac{1}{p}}. \end{equation*}  But Theorem \ref{SparseDomination} and the monotone convergence theorem then says for any $\V{f} \in L^p(V)$, we have that \begin{align*} \| S_U \V{f} \|_{L^p} & = \lim_{N \rightarrow \infty} \| S_{U, N} \V{f} \|_{L^p}
\\ & \lesssim  \|\tilde{S}_{U, \L} \V{f}\|_{ L^p}
\\ & \lesssim [U, V]_{\text{A}_p} ^\frac{1}{p}    [V^{-\frac{p'}{p}}]_{\text{A}_{p'} ^\text{wk}} ^{\frac{1}{p}}  \| \vec{f}\|_{L^p(V)}.  \end{align*}

 To prove Theorem \ref{MainThmNormSquareFunction} when $p > 2$, we argue as in \cite{CIM} and use a routine duality argument.  In fact, we will prove a slightly stronger result.  Given a sparse collection $\L$ and $r > 0$, define the  $\tilde{S}_U = \tilde{S}_{U, \L, r, p} $ by \begin{equation*} \tilde{S}_U \V{f} (x) := \left( \sum_{L \in \L}  \innp{|\MC{U}_L  \V{f}|} _L ^r \norm {U^\frac{1}{p} (x) \MC{U}_L^{-1} } ^r 1_L (x) \right)^\frac{1}{r}. \end{equation*}  Assume $p > r$ so that $\frac{p}{r} > 1$.  Then \begin{align*} \norm{\tilde{S}_U \V{f} } _{L^p} & = \norm{\sum_{L \in \L}  \innp{|\MC{U}_L  \V{f}|} _L ^r \norm {U^\frac{1}{p}  \MC{U}_L^{-1} } ^r 1_L }_{L^{\frac{p}{r}}} ^\frac{1}{r}
 \\ & \lesssim \sup_{\|g\|_{L^{\frac{p}{p-r}} \leq 1}} \pr{ \sum_{L \in \L}  |L| \innp{|\MC{U}_L  \V{f}|} _L ^r  \innp{\norm {U^\frac{1}{p}  \MC{U}_L^{-1} } ^r g }_L   }^\frac{1}{r}.
 \end{align*}

However, as in the $1 < p \leq 2$ case, by the sharp reverse H\"{o}lder inequality for A${}_\infty$ weights we can pick $\epsilon_1 \approx \br{V^{-\frac{p'}{p}}}_{\Apprimewk}^{-1}$ and $\epsilon_2 \approx \br{U}_{\Apwk} ^{-1}$ where

\begin{equation*} \innp{|\MC{U}_L  \V{f}|} _L ^r      \leq \innp{\|\MC{U}_L V^{-\frac{1}{p}}\|^{\frac{p-\epsilon_1}{p-\epsilon_1 - 1}}}_L ^{r\pr{\frac{p-\epsilon_1 - 1}{p-\epsilon_1}}}\innp{|V^{\frac{1}{p}}\V{f}|^{p-\epsilon_1}}_L ^{\frac{r}{p-\epsilon_1}}      \lesssim \br{U,V}_{\Ap} ^\frac{r}{p}  \innp{|V^{\frac{1}{p}}\V{f}|^{p-\epsilon_1}}_L ^\frac{r}{p-\epsilon_1} \end{equation*}

and

\begin{equation*}  \innp{\norm {U^\frac{1}{p}  \MC{U}_L^{-1} } ^r g }_L \leq \innp{\norm{U^\frac{1}{p}  \MC{U}_L^{-1} } ^{r\pr{\frac{p-\epsilon_2}{r-\epsilon_2}}}}_L ^\frac{r-\epsilon_2}{p-\epsilon_2}  \innp{|g|^\frac{p-\epsilon_2}{p - r} }_L ^ \frac{p - r}{p-\epsilon_2}
\lesssim \innp{|g|^\frac{p-\epsilon_2}{p - r} }_L ^ \frac{p - r}{p-\epsilon_2} \end{equation*}

If as usual \begin{equation*} E_L = L \backslash \bigcup_{\substack{L' \subsetneq  L \\ L' \in \L}} L' \end{equation*} then the sets $\{E_L : L \in \L\}$ are disjoint and $|L| \leq 2 |E_L|$.  We then have
\begin{align*} \br{U,V}_{\Ap} ^\frac{1}{p} &  \pr{\sum_{L \in \L} |L|  \innp{|V^\frac{1}{p} \V{f}|^{p-\epsilon_1}}_L ^\frac{r}{p-\epsilon_1} \innp{|g|^{\frac{p-\epsilon_2}{p - r}} }_L ^ {\frac{p - r}{p-\epsilon_2}}} ^\frac{1}{r}
\\ & \lesssim \br{U,V}_{\Ap} ^\frac{1}{p}  \pr{\sum_{L \in \L} \int_{E_L}  \pr{M_d |V^\frac{1}{p}\V{f}|^{p-\epsilon_1} (x) } ^{\frac{r}{p-\epsilon_1}} \pr{ M_d    |g|^{\frac{p-\epsilon_2}{p - r}}  (x)}^ {\frac{p - r}{p-\epsilon_2}} \, dx } ^\frac{1}{r}
\\ & \leq \br{U,V}_{\Ap} ^\frac{1}{p} \pr{ \inrd (M_d |V^\frac{1}{p}\V{f}|^{p-\epsilon_1})^\frac{p}{p-\epsilon_1}}^\frac{1}{p} \pr{ \inrd (M_d |g|^{\frac{p-\epsilon_2}{p - r}})^\frac{p}{p-\epsilon_2}}^\pr{\frac{1}{r} - \frac{1}{p}}
\\ & \lesssim   \br{U,V}_{\Ap} ^\frac{1}{p} (\epsilon_1) ^{-\frac{1}{p}} (\epsilon_2)^{- \pr{\frac{1}{r} - \frac{1}{p}}} \|\V{f}\|_{L^p(V)} \|g\|_{L^{\frac{p}{p-r}}} ^{\frac{1}{r}} \end{align*} where $M_d$ is the dyadic maximal function.  This completes the proof.

Notice that when $r = 1$ and $p > 1$ we get that \begin{equation*} \norm{\MC{\tilde{S}}_{U, 1}}_{L^p(V)  \rightarrow L^p} \lesssim \br{U,V}_{\Ap} ^p \br{V^{-\frac{1}{p}}} _{\Apprimewk} ^\frac{1}{p} \br{U}_{\Apwk} ^\frac{1}{p'}  \end{equation*} which as mentioned before coincides with the best known A${}_p$ - A${}_\infty$ bound for sparse operators, since a sparse operator $\tilde{S}_{U, V}$ defined by \begin{align*} \tilde{S}_{U} \V{f} (x) = \sum_{L \in \L} \abs{U^\frac{1}{p} (x) \innp{ \V{f}}_L} 1_L(x) \end{align*}      can be trivially dominated by $\MC{\tilde{S}}_{U,1}$.

\end{document}